\definecolor{refkey}{rgb}{0,0,1}
\definecolor{labelkey}{rgb}{1,0,0}
\numberwithin{equation}{section}
\newtheorem{theorem}{Theorem}[section]
\newtheorem{proposition}[theorem]{Proposition}
\newtheorem{lemma}[theorem]{Lemma}
\newtheorem{Definition}[theorem]{Definition}
\newtheorem{Remark}[theorem]{Remark}
\newenvironment{remark}{\begin{Remark}\rm}{\end{Remark}}
\newtheorem{Example}[theorem]{Example}
\newtheorem{RHproblem}[theorem]{RH problem}
\newcommand{\C}{\mathbb{C}}
\newcommand{\R}{\mathbb{R}}
\renewcommand{\hat}{\widehat}
\renewcommand{\tilde}{\widetilde}
\def\det{\mathop{\mathrm{det}}\nolimits}
\begin{document}
\title{Directional Chebyshev Constants on the Boundary}
\author{Thomas Bloom and Norman Levenberg}
\maketitle 

\begin{abstract} We prove results on existence of limits in the definition of (weighted) directional Chebyshev constants at all points of the standard simplex $\Sigma \subset \R^d$ for (locally) regular compact sets $K\subset \C^d$. 
\end{abstract}

\section{Introduction} The multivariate transfinite diameter of a compact set $K\subset \C^d$ was introduced by Leja as a generalization of the notion defined by Fekete in the univariate case ($d=1$). Let $e_{1}(z),\ldots,e_{j}(z),\ldots$ be a listing of the monomials 
$e_{j}(z)=z^{\alpha(j)}=z_{1}^{\alpha_{1}(j)}\cdots z_{d}^{\alpha_{d}(j)}$ in $\C^{d}$ where the multi-indices $\alpha(j)=(\alpha_{1}(j),...,\alpha_{d}(j))$ are arranged in some order $\prec$ with the property  that $\alpha\prec\beta$ if $|\alpha|<|\beta|$. Let $d_{n}$ be the dimension of the space $\mathcal P_{n}$ of holomorphic polynomials of degree at most $n$ in $\C^d$. For $\zeta_{1},\ldots,\zeta_{d_{n}}\in\C^{d}$, let
\begin{equation}\label{VDM}
VDM(\zeta_1,...,\zeta_{d_n}):=\det [e_i(\zeta_j)]_{i,j=1,...,d_n}  
= \det
\left[
\begin{array}{ccccc}
 e_1(\zeta_1) &e_1(\zeta_2) &\ldots  &e_1(\zeta_{d_n})\\
  \vdots  & \vdots & \ddots  & \vdots \\
e_{d_n}(\zeta_1) &e_{d_n}(\zeta_2) &\ldots  &e_{d_n}(\zeta_{d_n})
\end{array}
\right]
\end{equation}
and for a compact subset $K\subset \C^d$ let
\begin{equation}\label{Vn} 
V_n =V_n(K):=\max_{\zeta_1,...,\zeta_{d_n}\in K}|VDM(\zeta_1,...,\zeta_{d_n})|.
\end{equation}
Then
\begin{equation}\label{delta} \delta(K):= \limsup_{n\to \infty}V_{n}^{1/l_n}\end{equation}
is the transfinite diameter of $K$ where $l_n:=\sum_{j=1}^{d_n} {\rm deg}(e_j)$.

For this definition, one only requires $\alpha\prec\beta$ if $|\alpha|<|\beta|$, but for what follows, it will be convenient to use a particular order $\prec$; in this paper we will take one satisfying (by abuse of notation) $z_1\prec  z_2 \cdots \prec z_d$. Zaharjuta \cite{Z} verified the existence of the limit in (\ref{delta}) by introducing directional Chebyshev constants $\tau(K,\theta)$ and proving  
\begin{equation}\label{tdstd} \delta(K)=\exp \left(\frac{1}{|\Sigma|}\int_{\Sigma^0} \log \tau(K,\theta)d\theta \right)\end{equation}
where $\Sigma:=\{(x_1,...,x_d)\in \R^d: 0\leq x_i \leq 1, \ \sum_{j=1}^d x_i = 1\}$is the standard simplex in $\R^d$; $\Sigma^0=\{(x_1,...,x_d)\in \R^d: 0 < x_i <1, \ \sum_{j=1}^d x_i = 1\}$; and $|\Sigma|$ is the $(d-1)-$dimensional measure of $\Sigma$. If we need to specify the dimension, we will write $\Sigma^{(d)}$ for $\Sigma$. To define $\tau(K,\theta)$, for each multi-index $\alpha$ we let 
\begin{equation} \label{monicclass} M_{\alpha}:= \{p\in \C[z_1,...,z_d]: p(z)=z^{\alpha} +\sum_{\beta \prec \alpha} c_{\beta}z^{\beta}\}\end{equation}
and define the corresponding Chebyshev constants
$$\tau_{K,\alpha}:=\inf \{||p||_K:=\max_{z\in K}|p(z)|:p\in M_{\alpha}\}^{1/|\alpha|}.$$
Zaharjuta showed that for $\theta\in \Sigma^0$, the limit
$$\tau(K,\theta):= \lim_{\frac{\alpha}{|\alpha|}\to \theta}\tau_{K,\alpha}$$
exists and $\theta \to \tau(K,\theta)$ is continuous on $\Sigma^0$. 

For $\theta \in \Sigma \setminus \Sigma^0$, the limit need not exist; an elementary example was given by Zaharjuta: let $K=\{(z_1,z_2) \in \C^2: z_2=0, \ |z_1|\leq 1\}$ and $\theta =(1,0)$ (see Remark 1 in \cite{Z}). Here $K$ is pluripolar; utilizing this set one can easily construct nonpluripolar examples with the same property. However, If $K$ is sufficiently nice, our main result is that the limit exists. Given $K\subset \C^d$ compact, for $\theta\in \Sigma$ we will define
\begin{equation} \label{bdrydirl} \tau(K,\theta):= \limsup_{\frac{\alpha}{|\alpha|}\to \theta}\tau_{K,\alpha}. \end{equation}
In fact, more generally, given a continuous, admissible weight function $w$ on $K$, i.e., $w\geq 0$ with $\{z\in K: w(z)>0\}$ nonpluripolar, one can define {\it weighted} Chebyshev constants 
$$\tau^w_{K,\alpha}:=\inf \{||w^{|\alpha |}p||_K:p\in M_{\alpha}\}^{1\over |\alpha |}.$$
Then for $\theta\in \Sigma$, define 
\begin{equation} \label{wbdrydirl} \tau^w(K,\theta):= \limsup_{\frac{\alpha}{|\alpha|}\to \theta}\tau^w_{K,\alpha}. \end{equation}
It follows from \cite{[BL]} that for $\theta\in \Sigma^0$, the limit in (\ref{wbdrydirl}) exists.

\begin{theorem} \label{maint} For $K\subset \C^d$ compact and regular, if $\theta \in \Sigma \setminus \Sigma^0$, 
$$\tau(K,\theta)= \lim_{\frac{\alpha}{|\alpha|}\to \theta}\tau_{K,\alpha};$$ i.e., the limit in (\ref{bdrydirl}) exists. For $K\subset \C^d$ compact and locally regular, and $w\geq 0$ a continuous, admissible weight function on $K$, if $\theta \in \Sigma \setminus \Sigma^0$, 
$$\tau^w(K,\theta)= \lim_{\frac{\alpha}{|\alpha|}\to \theta}\tau^w_{K,\alpha};$$ i.e., the limit in (\ref{wbdrydirl}) exists.
\end{theorem}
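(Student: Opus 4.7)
The plan is to reduce the boundary statement to Zaharjuta's interior theorem (in weighted form, see \cite{[BL]}) using two additional ingredients: submultiplicativity of the (weighted) Chebyshev constants, and continuity of the (weighted) pluricomplex Green function coming from (local) regularity of $K$.

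The submultiplicativity I have in mind is the elementary estimate
\[
(|\alpha|+|\beta|)\log \tau^w_{K,\alpha+\beta}\le |\alpha|\log\tau^w_{K,\alpha}+|\beta|\log\tau^w_{K,\beta},
\]
an immediate consequence of $M_\alpha\cdot M_\beta\subseteq M_{\alpha+\beta}$ and $\|w^{|\alpha|+|\beta|}pq\|_K\le\|w^{|\alpha|}p\|_K\,\|w^{|\beta|}q\|_K$. Fix $\theta_0\in\Sigma\setminus\Sigma^0$ and pick a nearby $\theta^*\in\Sigma^0$. For any sequence $\alpha^{(n)}/|\alpha^{(n)}|\to\theta_0$ I would form $\gamma^{(n)}=\alpha^{(n)}+N\beta$, where $\beta$ is chosen in $\Sigma^0$ close to $\theta^*$ with $\tau^w_{K,\beta}$ close to $\tau^w(K,\theta^*)$ (possible by the interior limit), and $N=N(n)$ is chosen so that $\gamma^{(n)}/|\gamma^{(n)}|$ lies in $\Sigma^0$ and converges to a direction arbitrarily close to $\theta^*$. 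Submultiplicativity combined with Zaharjuta's interior theorem applied along $\gamma^{(n)}$ then yields a quantitative comparison of $\log \tau^w_{K,\alpha^{(n)}}$ with $\log\tau^w(K,\theta^*)$, producing an upper bound on $\limsup \log\tau^w_{K,\alpha^{(n)}}$ and (running the estimate with the roles of $\alpha^{(n)}$ and a mixing multi-index reversed) a lower bound on $\liminf \log\tau^w_{K,\alpha^{(n)}}$, both in terms of $\log\tau^w(K,\theta^*)$.

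These sandwich estimates close up once $\tau^w(K,\theta^*)$ is known to have a well-defined limit as $\theta^*\to\theta_0$ from $\Sigma^0$. This is where regularity enters: $-\log\tau^w(K,\theta)$ can be identified with a directional quantity built from the weighted pluricomplex Green function $V_{K,Q}$ with $Q=-\log w$, and (local) regularity of $K$ together with continuity of $w$ guarantees that $V_{K,Q}$ is continuous. From this continuity one reads off that $\theta\mapsto\log\tau^w(K,\theta)$, already upper semicontinuous on $\Sigma$ from the $\limsup$ definition, is continuous at the boundary. Combined with the sandwich this forces $\liminf\log\tau^w_{K,\alpha^{(n)}}=\limsup\log\tau^w_{K,\alpha^{(n)}}=\log\tau^w(K,\theta_0)$, proving existence of the limit. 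The unweighted case is the specialization $w\equiv 1$, with regularity replacing local regularity.

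The hard part will be the continuity assertion at boundary points. Interior continuity of $\tau^w(K,\cdot)$ on $\Sigma^0$ was established in \cite{[BL]}, but propagating it to $\Sigma\setminus\Sigma^0$ requires translating (local) regularity of $K$ into a uniform pluripotential-theoretic estimate that controls $V_{K,Q}$ along the degenerating directions; the combinatorial submultiplicativity part of the argument is routine once this continuity is in hand.
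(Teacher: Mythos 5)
Your outline defers precisely the point where the theorem lives. The assertion that $-\log\tau^w(K,\theta)$ ``can be identified with a directional quantity built from the weighted pluricomplex Green function $V_{K,Q}$,'' so that continuity of $V_{K,Q}$ yields continuity of $\theta\mapsto\log\tau^w(K,\theta)$ up to $\Sigma\setminus\Sigma^0$, is not a known fact and is essentially the whole content of the theorem; no pointwise formula expressing an individual directional Chebyshev constant through $V_{K,Q}$ is available (the known links are the integral formula (\ref{tdstd}) over $\Sigma$ and, for circled sets, Robin-function relations for families of Chebyshev polynomials). You correctly observe that $\theta\mapsto\tau^w(K,\theta)$ defined by (\ref{wbdrydirl}) is upper semicontinuous, but the matching lower estimate at boundary points is exactly what must be proved, and Zaharjuta's example $K=\{z_2=0,\,|z_1|\le 1\}$, $\theta=(1,0)$ shows it cannot come from soft arguments alone. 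Moreover, the submultiplicative sandwich does not close in the direction you need: writing $\gamma^{(n)}=\alpha^{(n)}+N\beta$ with $\beta$ interior gives
$$|\gamma^{(n)}|\log\tau^w_{K,\gamma^{(n)}}\le |\alpha^{(n)}|\log\tau^w_{K,\alpha^{(n)}}+N|\beta|\log\tau^w_{K,\beta},$$
i.e.\ only a \emph{lower} bound on $\liminf\log\tau^w_{K,\alpha^{(n)}}$. To get the upper bound on the $\limsup$ you would need to decompose $\alpha^{(n)}=\delta^{(n)}+N\beta$ with $\beta$ interior, which is impossible when the small coordinates of $\alpha^{(n)}$ are literally $0$ (e.g.\ $\alpha^{(n)}=(n,0,\dots,0)$), and in any case pushes $\delta^{(n)}$ toward an even more degenerate direction, making the estimate circular. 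So both halves of your ``sandwich'' rest on steps that are either unproved or fail.

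For contrast, the paper does not attempt boundary continuity of $\tau^w(K,\cdot)$ directly. It argues by induction on dimension: Proposition \ref{step1} reduces a regular $K$ to the circled set $K_\rho$ via Robin functions; Proposition \ref{step2} reduces the weighted problem on a locally regular $K$ to the unweighted problem on the regular sublevel set $Z$ of $V_{K,Q}$ (following \cite{BL2}); Proposition \ref{step3} reduces a circled regular set in $\C^d$, for directions with $\theta_1<1$, to a \emph{weighted} problem in $\C^{d-1}$ via $z\mapsto(z_2/z_1,\dots,z_d/z_1)$ after cutting away $\{|z_1|<\eta\}$ (Lemmas \ref{L}, \ref{rafal}); and the vertex direction $(1,0,\dots,0)$ is handled separately in Lemma \ref{100} by an explicit factorization plus an interior ellipsoid comparison after an affine change of coordinates. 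The base case $d=1$ is Theorem III.3.1 of \cite{SaTo}. If you want to salvage your approach, the missing ingredient is a proof of the lower semicontinuity of $\tau^w(K,\cdot)$ at $\Sigma\setminus\Sigma^0$ together with the identification $\liminf_{\alpha/|\alpha|\to\theta}\tau^w_{K,\alpha}=\liminf_{\theta'\to\theta,\ \theta'\in\Sigma^0}\tau^w(K,\theta')$ (the latter is Zaharjuta's Lemma 3, reproved in Lemma \ref{100}); but establishing that lower semicontinuity from (local) regularity is, in effect, the theorem itself.
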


Our interest in this result is that it provides an essential step in proving a result on convergence in probability related to random polynomials in a weighted setting \cite{BDL}.

We recall these notions of regularity for $K\subset \C^d$ compact and other relevant notions in pluripotential theory in the next section. Section 3 contains the proof of Theorem \ref{maint} and an appendix follows with proofs of two auxiliary results. 
\medskip

\noindent{\bf Acknowledgements:} We would like to thank Rafal Pierzchala for the proof of Lemma \ref{rafal}. 

\section{Pluripotential preliminaries} Recall ${\mathcal P}_n$ denotes the holomorphic polynomials of degree at most $n$ in $\C^d$. For $K \subset \C^d$ compact, we let $V_K^*(z):=\limsup_{\zeta \to z}V_K(\zeta)$ where 
$$V_K(z):=\sup\{u(z):u\in L(\C^d) \ \hbox{and} \  u\leq 0 \ \hbox{on} \ K\}$$
$$= \sup \{{1\over {\rm deg} (p)}\log |p(z)|:p\in \bigcup {\mathcal P}_n, \  ||p||_{K}\leq 1 \}.$$
Here $L(\C^d)$ is the set of all plurisubharmonic functions on $\C^d$ of logarithmic growth; i.e., $u\in L(\C^d)$ if $u$ is plurisubharmonic in $\C^d$ and $u(z)\leq \log |z| +0(1)$ as $|z|\to \infty$. It is known that $K$ is pluripolar if and only if $V_K^*\equiv +\infty$; otherwise $V_K^*\in L(\C^d)$. A compact set $K\subset \C^d$ is {\it regular} at $a\in K$ if $V_K$ is continuous at $a$; $K$ is regular if it is continuous on $K$, i.e., if $V_K=V_K^*$. A stronger notion is that of {\it local regularity}: $K$ is locally regular if it is locally regular at each of its points; i.e., for $a\in K$ and $r>0$, $V_{K\cap B(a,r)}^*(a)=0$ where $B(a,r):=\{z:|z-a|\leq r\}$. Finally, $K$ is {\it polynomially convex} if $K=\hat K$ where 
$$
  \hat K:= \{z\in \C^d:|p(z)|\leq ||p||_K,
 \ p \ {\rm polynomial}\}.
$$
Note that for any compact set $K$, $V_K=V_{\hat K}$ because $||p||_K =||p||_{\hat K}$ for any $p\in \bigcup {\mathcal P}_n$. 

Next, for a continuous, admissible weight function $w$ on $K$, let $Q:= -\log w$ and define the weighted extremal function $V^*_{K,Q}(z):=\limsup_{\zeta \to z}V_{K,Q}(\zeta)$ where
$$V_{K,Q}(z):=\sup \{u(z):u\in L(\C^d), \ u\leq Q \ \hbox{on} \ K\}$$
$$= \sup \{{1\over {\rm deg} (p)}\log |p(z)|:p\in \bigcup {\mathcal P}_n, \  ||w^{deg(p)}p||_{K}\leq 1 \}.$$
If $K$ is locally regular and $w$ is continuous, it is known that $V_{K,Q}=V_{K,Q}^*$; i.e., this function is continuous. If $K$ is only regular, this conclusion may fail. 

For a function $u\in L(\C^d)$, the {\it Robin function $\bar \rho_u$} associated to $u$ is defined as 
\begin{equation}\label{stdrobin} \bar \rho_u(z):=\limsup_{|\lambda|\to \infty} [u(\lambda z)-\log |\lambda|]. \end{equation}
Such a function is logarithmically homogeneous: $\bar \rho_u(tz)=\bar \rho_u(z)+\log |t|$ for $t\in \C$.
In particular, for $K$ nonpluripolar, we write $\bar \rho_K:=\bar \rho_{V_K^*}$ (similarly, $\bar \rho_{K,Q}:=\bar \rho_{V_{K,Q}^*}$) and define 
\begin{equation}\label{krho} K_{\rho}:=\{z\in \C^d: \bar \rho_K \leq 0\}.\end{equation}
The set $K_{\rho}$ is {\it circled}; i.e., $z\in K_{\rho}$ implies $e^{i\phi}z\in K_{\rho}$ for $\phi \in \R$. Moreover, $V_{K_{\rho}}=\max[0,\bar \rho_K]$.

We follow the notation and terminology of \cite{B2}. Given $K$ compact and a multiindex $\alpha$, we let $t_{\alpha,K}\in M_{\alpha}$ be a Chebyshev polynomial; i.e., $||t_{\alpha,K}||=\tau_{K,\alpha}^{|\alpha|}$. More generally, if $w\geq 0$ is an admissible weight on $K$, we let $t^w_{\alpha,K}\in M_{\alpha}$ be a weighted Chebyshev polynomial; i.e., $||t^w_{\alpha,K}||=(\tau^w_{K,\alpha})^{|\alpha|}$. For a polynomial $p(z)=\sum_{|\alpha|\leq d} c_{\alpha}z^{\alpha}$ with at least one $c_{\alpha}\not =0$ when $|\alpha|=d$, we let $\hat p(z)= \sum_{|\alpha| = d} c_{\alpha}z^{\alpha}$. For such $p$, letting $u(z):=\frac{1}{d} \log |p(z)|$, we have $u\in  L(\C^d)$ and $\bar \rho_u(z)= \frac{1}{d} \log |\hat p(z)|$. Finally, given a homogeneous polynomial $Q$ of degree $d$, we let $Tch_K Q$ denote a (Chebyshev) polynomial satisfying $\widehat {Tch_K Q}=Q$ and 
$$||Tch_K Q||_K=\inf\{ ||Q+h||_K: h\in \mathcal P_{d-1}\}.$$

 \section{Directional Chebyshev constants on the boundary}
The proof of our main result, Theorem \ref{maint}, consists of three steps:

\begin{enumerate}
\item the limit in (\ref{bdrydirl}) exists for $K$ circled and regular in $\C^d$ implies the limit in (\ref{bdrydirl}) exists for $K$ compact and regular in $\C^d$ (Proposition \ref{step1});

\item the limit in (\ref{bdrydirl}) exists for $K$ compact and regular in $\C^d$ implies the limit (\ref{wbdrydirl}) in the weighted case exists for $K$ compact and locally regular in $\C^d$ and $w$ admissible and continuous on $K$ (Proposition \ref{step2});

\item  the limit (\ref{wbdrydirl}) in the weighted case exists for $K$ compact and locally regular in $\C^{d-1}$ and $w$ admissible and  and continuous on $K$ implies the limit in (\ref{bdrydirl}) exists for $K$ circled and regular in $\C^d$ (Proposition \ref{step3}).

\end{enumerate}

\noindent From 1.-3. and the standard fact that, for $K\subset \C$ compact and $w$ admissible on $K$, the limit $\lim_{n\to \infty} \tau_{K,n}^w$ of weighted Chebyshev constants 
$$\tau_{K,n}^w:=\bigl(\inf \{||w^np_n||_K: p_n(z) = z^n + ...\}\bigr)^{1/n}$$
exists (cf., Theorem III.3.1 in \cite{SaTo}), Theorem \ref{maint} follows. We summarize the argument at the end of this section.
 
Let $K\subset \mathbb{C}^d$ be a compact, regular set. Our first result relates directional Chebyshev constants of $K$ with those of $K_{\rho}$ (see (\ref{krho})). We note that if $K$ is regular, it is known that $\bar \rho_K$ is continuous and $K_{\rho}$ is regular (cf., Corollary 4.4 of \cite{BLM}). Fix $\theta\in\Sigma$. Let $\{\alpha(j)\}$ for $j=1,2...$
 be a sequence of $d-$multi-indices with
\begin{equation}\label{lim}
 \lim_{j \to \infty}\frac{\alpha(j)}{|\alpha(j)|}=\theta.
 \end{equation}

 \begin{proposition} \label{step1} Let $K\subset \mathbb{C}^d$ be a compact, regular set. 
 Suppose that for all sequences satisfying (\ref{lim}) 
$$\lim_{j \to \infty}||t_{\alpha(j),K_{\rho}}||^{\frac{1}{|\alpha(j)|}}_{K_{\rho}}=:\tau(K_{\rho},\theta)$$ 
exists and all the limits are equal. Then for all sequences satisfying (\ref{lim})
 $$\lim_{j \to \infty}||t_{\alpha(j),K}||^{\frac{1}{|\alpha(j)|}}=:\tau(K,\theta)$$
 exists and all the limits are equal. Furthermore, $\tau(K_{\rho},\theta)$=$\tau(K,\theta)$. 
\end{proposition}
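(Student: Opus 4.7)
The plan is to sandwich $\tau_{K,\alpha(j)}$ between two quantities both converging to $\tau(K_\rho,\theta)$. I will establish the pointwise inequality $\tau_{K_\rho,\alpha}\le \tau_{K,\alpha}$ for every multi-index $\alpha$ (giving the $\liminf$ bound immediately from the hypothesis), together with an asymptotic reverse inequality $\limsup_j\tau_{K,\alpha(j)}\le \tau(K_\rho,\theta)$ produced from homogeneous Chebyshev candidates on $K_\rho$. For the easy direction I take $t_{\alpha,K}$ and look at its leading homogeneous part $\widehat{t_{\alpha,K}}$, which still lies in $M_\alpha$ because the ordering $\prec$ is graded. The Robin-function inequality $|\widehat{t_{\alpha,K}}(z)|\le \|t_{\alpha,K}\|_K\,e^{|\alpha|\,\bar\rho_K(z)}$ (obtained by taking the limsup as $|\lambda|\to\infty$ of $\tfrac{1}{|\alpha|}\log|t_{\alpha,K}(\lambda z)|-\log|\lambda|$ in Bernstein--Walsh) then forces $\|\widehat{t_{\alpha,K}}\|_{K_\rho}\le \tau_{K,\alpha}^{|\alpha|}$ on $K_\rho=\{\bar\rho_K\le 0\}$, whence $\tau_{K_\rho,\alpha}\le \tau_{K,\alpha}$.

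For the reverse direction the key observation is that $K_\rho$ is circled, so for any polynomial $p$ the identity $p_k(z)=\tfrac{1}{2\pi}\int_0^{2\pi} p(e^{i\phi}z)\,e^{-ik\phi}\,d\phi$ gives $\|\widehat{p}\|_{K_\rho}\le \|p\|_{K_\rho}$. Consequently the homogeneous polynomial $q_\alpha:=\widehat{t_{\alpha,K_\rho}}$ of degree $|\alpha|$ still lies in $M_\alpha$ and satisfies $\|q_\alpha\|_{K_\rho}=\tau_{K_\rho,\alpha}^{|\alpha|}$. Since $Tch_K q_\alpha\in M_\alpha$ (its leading homogeneous part is $q_\alpha$, with correction in $\mathcal P_{|\alpha|-1}$), we obtain
\[
\tau_{K,\alpha}^{|\alpha|}\;\le\;\|Tch_K q_\alpha\|_K.
\]
The proof is then completed by an auxiliary comparison lemma---which I expect to be among the results deferred to the appendix---asserting that for $K$ regular and any sequence of homogeneous polynomials $Q_n$ of degree $n$,
\[
\limsup_{n\to\infty}\|Tch_K Q_n\|_K^{1/n}\;\le\;\limsup_{n\to\infty}\|Q_n\|_{K_\rho}^{1/n}.
\]
Applied with $Q_{|\alpha(j)|}:=q_{\alpha(j)}$, this yields $\limsup_j \tau_{K,\alpha(j)}\le \tau(K_\rho,\theta)$, closing the sandwich.

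The main obstacle is this comparison lemma. It asserts that the $K$-norm and the $K_\rho$-norm of a homogeneous polynomial agree at the Chebyshev exponential rate, even though typically $K\not\subset K_\rho$; in particular, the naive Bernstein--Walsh bound $\|Q_n\|_K\le e^{n\max_K V_{K_\rho}^*}\|Q_n\|_{K_\rho}$ is far too crude, since $\max_K V_{K_\rho}^*$ is usually strictly positive. The point is to absorb this discrepancy into lower-degree corrections $r_n\in\mathcal P_{n-1}$ with $\|Q_n+r_n\|_K\le e^{o(n)}\|Q_n\|_{K_\rho}$, and it is precisely here that the regularity of $K$---equivalently, the continuity of $V_K$---enters essentially, through the Bernstein--Walsh--Siciak approximation machinery it enables. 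Without regularity the construction collapses, in line with Zaharjuta's pluripolar counterexample.
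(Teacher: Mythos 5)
Your argument is correct and is essentially the paper's own proof: the lower bound comes from the leading homogeneous part $\hat t_{\alpha,K}$ restricted to $K_\rho$ (this is inequality (3.7) of \cite{B2}), homogeneity of $t_{\alpha,K_\rho}$ follows from $K_\rho$ being circled exactly as you argue, and the upper bound is obtained by applying $Tch_K$ to the homogeneous Chebyshev polynomials of $K_\rho$. The only discrepancy is bibliographic rather than mathematical: the comparison lemma you expected to find in the appendix is not there (the appendix contains two unrelated lemmas for Proposition \ref{step3}); it is exactly the result the paper imports from \cite{B2}, equation (3.11), where the regularity of $K$ enters precisely as you predict, so your reliance on it is the same reliance the paper makes.
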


\begin{proof} 
From \cite{B2}, equation (3.7), we have
$$||\hat{t}_{\alpha(j),K}||_{K_{\rho}}\leq ||t_{\alpha(j),K}||_K.$$
Since $\hat{t}_{\alpha(j),K}$ is a candidate for $t_{\alpha(j),K_{\rho}}$
$$ ||t_{\alpha(j),K_{\rho}}||_{K_{\rho}}\leq
||\hat{t}_{\alpha(j),K}||_{K_{\rho}}.$$
From the previous two inequalities, together with our hypothesis, we have
\begin{equation}\label{limlower}
\tau(K_{\rho},\theta)=\lim_{j \to \infty}||t_{\alpha(j),K_{\rho}}||^{\frac{1}{|\alpha(j)|}}_{K_{\rho}}\leq \liminf_{j \to \infty}    ||t_{\alpha(j),K}||_K^{\frac{1}{|\alpha(j)|}}.  
\end{equation}
Now $K_{\rho}$ is circled; thus $t_{\alpha(j),K_{\rho}}$ may be taken to be homogeneous of degree $|\alpha(j)|$. Moreover, the extremal function for $K_{\rho}$ is given by $V_{K_{\rho}}=\max [0,\bar \rho_K]$.

Thus,
$$\frac{1}{|\alpha(j)|}\log|t_{\alpha(j),K_{\rho}}(z)|
 \leq\frac{1}{|\alpha(j)|}\log||t_{\alpha(j),K_{\rho}}||_{K_{\rho}} +\bar \rho_K(z);$$
hence
 $$\limsup_{j \to \infty}\frac{1}{|\alpha(j)|}\log|t_{\alpha(j),K_{\rho}}(z)|
 \leq\limsup_{j \to \infty}\frac{1}{|\alpha(j)|}\log||t_{\alpha(j),K_{\rho}}||_{K_{\rho}} +\bar \rho_K(z)$$
for all $z\in\mathbb{C}^d$.

Using the hypothesis that the limit on the right of the above equation exists, as in \cite{B2}, equation (3.11), we have
$$\limsup_{j \to \infty}\frac{1}{|\alpha(j)|}\log||Tch_Kt_{\alpha(j),K_{\rho}}||_K
 \leq\log\tau(K_{\rho},\theta).$$
But by the defining property of $t_{\alpha(j),K}$,
    $$||t_{\alpha(j),K}||_K\leq||Tch_Kt_{\alpha(j),K_{\rho}}||_K$$
 so that 
 \begin{equation}\label{limupper}
 \limsup ||t_{\alpha(j),K}||_K^{\frac{1}{|\alpha(j)|}}\leq \tau(K_{\rho},\theta).   \end{equation}
Combining (\ref{limupper}) and (\ref{limlower}) we conclude.
\end{proof}
\medskip

For the next result we will use notation and results from \cite{BL2}, particularly Proposition 3.4. Let $K\in\mathbb{C}^d$ be a locally regular compact set and $w=e^{-Q}$ a continuous admissible weight function on $K$. Let
$$Z=Z(K):=\{z\in\mathbb{C}^d : V_{K,Q}(z) \leq M=M(K):=||V_{K,Q}||_K\}.$$
From the proof of Theorem 2.5 in \cite{BL2}, $V_Z$ is continuous, i.e., $Z$ is regular, and we have $V_{K,Q}=V_Z+M$
on $\mathbb{C}^d\setminus Z$. By Proposition 3.4 of \cite{BL2}, for $\theta\in\Sigma^0$ we have 
$$\tau(Z,\theta)=e^{M}\tau^w(K,\theta).$$
We will show:

\begin{proposition} \label{step2} Let $K$ be compact and locally regular in $\C^d$ and let $w=e^{-Q}$ be admissible and continuous on $K$. 
Suppose that for all sequences satisfying (\ref{lim})
$$\lim_{j \to \infty}||t_{\alpha(j),Z}||_Z^{\frac{1}{|\alpha(j)|}}=:\tau(Z,\theta)$$
exists and all limits are equal. Then for all sequences satisfying (\ref{lim}) all the limits 
 $$\lim_{j \to \infty}||w^{|\alpha(j)|}t^w_{\alpha(j),K}||_K^{\frac{1}{|\alpha(j)|}}=:\tau^w(K,\theta)$$
exist and are equal. Furthermore, $\tau(Z,\theta)=e^{M}\tau^w(K,\theta)$.

\end{proposition}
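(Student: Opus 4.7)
My plan is to show that for any sequence $\{\alpha(j)\}$ satisfying (\ref{lim}), $\lim_{j\to\infty}||w^{|\alpha(j)|}t^w_{\alpha(j),K}||_K^{1/|\alpha(j)|}$ exists and equals $e^{-M}\tau(Z,\theta)$. The strategy is to establish the two-sided comparison $e^{-M}\tau_{Z,\alpha}\leq\tau^w_{K,\alpha}\leq e^{-M}\tau_{Z,\alpha}\,c_{|\alpha|}^{1/|\alpha|}$, with $c_n^{1/n}\to 1$ as $n\to\infty$, and then to invoke the hypothesis that the unweighted Chebyshev constants on $Z$ converge. The easy direction $\tau_{Z,\alpha}\leq e^{M}\tau^w_{K,\alpha}$ follows by applying the weighted Bernstein-Walsh inequality to $t^w_{\alpha,K}$: from $|t^w_{\alpha,K}(z)|\leq(\tau^w_{K,\alpha})^{|\alpha|}\exp(|\alpha|V_{K,Q}(z))$ and $V_{K,Q}\leq M$ on $Z$, one gets $||t^w_{\alpha,K}||_Z\leq e^{|\alpha|M}(\tau^w_{K,\alpha})^{|\alpha|}$; since $t^w_{\alpha,K}\in M_\alpha$ is a candidate for $t_{\alpha,Z}$, this gives $\tau_{Z,\alpha}\leq e^M\tau^w_{K,\alpha}$. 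Together with the hypothesis $\tau_{Z,\alpha(j)}\to\tau(Z,\theta)$, this already yields $\liminf_j\tau^w_{K,\alpha(j)}\geq e^{-M}\tau(Z,\theta)$.

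For the matching asymptotic upper bound, the plan is to follow the construction behind Proposition 3.4 of \cite{BL2}. The goal is to convert the Chebyshev polynomial $t_{\alpha,Z}\in M_\alpha$ into a candidate $p_\alpha\in M_\alpha$ for the weighted problem on $K$ whose weighted sup-norm satisfies $||w^{|\alpha|}p_\alpha||_K\leq e^{-|\alpha|M}\tau_{Z,\alpha}^{|\alpha|}c_{|\alpha|}$ with $c_n^{1/n}\to 1$. Because $V_{K,Q}=V_Z+M$ on $\C^d\setminus Z$ and $V_{K,Q}$ is continuous on all of $\C^d$ (by local regularity of $K$ and continuity of $Q$), one can approximate $|\alpha|(V_{K,Q}-M)$ uniformly on compact sets by the logarithm of a polynomial of comparable degree; multiplying $t_{\alpha,Z}$ by such an approximant and adjusting through a low-degree correction to preserve the leading monomial $z^\alpha$ produces the required $p_\alpha$. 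This yields the desired inequality $\tau^w_{K,\alpha}\leq e^{-M}\tau_{Z,\alpha}\,c_{|\alpha|}^{1/|\alpha|}$.

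Combining the two bounds, the ratio $\tau^w_{K,\alpha(j)}/(e^{-M}\tau_{Z,\alpha(j)})\to 1$ as $j\to\infty$, and so the hypothesis $\tau_{Z,\alpha(j)}\to\tau(Z,\theta)$ forces $\tau^w_{K,\alpha(j)}\to e^{-M}\tau(Z,\theta)$. Since this holds for every sequence satisfying (\ref{lim}), the limit in (\ref{wbdrydirl}) exists, all limits coincide, and their common value satisfies $\tau(Z,\theta)=e^M\tau^w(K,\theta)$. The hard part will be the upper-bound step: arranging the polynomial construction so that the subexponential error $c_{|\alpha|}$ is uniform in the direction $\alpha/|\alpha|$, even as $\alpha/|\alpha|$ approaches the boundary of $\Sigma$. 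This is precisely the point at which local regularity of $K$ (rather than mere regularity) is used in an essential way, since it guarantees the continuity of $V_{K,Q}$ that underlies the uniform polynomial approximability of $V_{K,Q}-M$ needed for the candidate construction.
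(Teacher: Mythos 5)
Your lower bound is exactly the paper's: the weighted Bernstein--Walsh inequality applied to $t^w_{\alpha,K}$ together with $V_{K,Q}\leq M$ on $Z$ and the fact that $t^w_{\alpha,K}$ is a candidate for $t_{\alpha,Z}$ gives $\tau_{Z,\alpha}\leq e^{M}\tau^w_{K,\alpha}$, hence $\liminf_j \tau^w_{K,\alpha(j)}\geq e^{-M}\tau(Z,\theta)$. The problem is your upper bound. You assert the index-by-index inequality $\tau^w_{K,\alpha}\leq e^{-M}\tau_{Z,\alpha}\,c_{|\alpha|}^{1/|\alpha|}$ with $c_n^{1/n}\to 1$ uniformly in the direction $\alpha/|\alpha|$, to be obtained by multiplying $t_{\alpha,Z}$ by a polynomial whose normalized logarithm approximates $V_{K,Q}-M$ and then ``adjusting through a low-degree correction to preserve the leading monomial $z^\alpha$.'' This construction cannot work as described: to produce a pointwise factor of size $e^{|\alpha|(Q(z)-M)}$ on $K$ the multiplier must have degree proportional to $|\alpha|$, and then the product has degree strictly larger than $|\alpha|$ with a different leading monomial, so it does not lie in $M_\alpha$, and no correction by terms of degree $o(|\alpha|)$ can restore membership in $M_\alpha$. (One could instead split $\alpha=\beta+\gamma$ with $|\gamma|\asymp\epsilon|\alpha|$ and multiply $t_{\beta,Z}$ by a weighted approximant with prescribed leading monomial $z^{\gamma}$, but then establishing the required gain $e^{-|\alpha|M}$ together with a subexponential error that is uniform as $\alpha/|\alpha|$ tends to the boundary of $\Sigma$ is precisely the hard content, and your sketch does not supply it; moreover a single polynomial's normalized logarithm cannot in general approximate $V_{K,Q}-M$ uniformly on a neighborhood of $K$.) As written, the crucial inequality is simply asserted, so the hard direction is not proved.

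The paper avoids any such pointwise comparison and argues asymptotically: from the Bernstein--Walsh inequality $\frac{1}{|\alpha(j)|}\log|t_{\alpha(j),Z}|\leq V_Z+\frac{1}{|\alpha(j)|}\log\|t_{\alpha(j),Z}\|_Z$ one takes Robin functions to pass to the leading homogeneous parts $\hat t_{\alpha(j),Z}$, uses $\bar\rho_{K,Q}=\bar\rho_Z+M$ (from $V_{K,Q}=V_Z+M$ off $Z$), and uses the hypothesis that $\tau(Z,\theta)$ exists to get the pointwise asymptotic bound $\limsup_j\frac{1}{|\alpha(j)|}\log|\hat t_{\alpha(j),Z}(z)|\leq\bar\rho_{K,Q}(z)-M+\log\tau(Z,\theta)$; then Theorem 3.3 of \cite{BL2} (this is where local regularity and the continuity of $V_{K,Q}$ do the real work) converts this into $\limsup_j\|w^{|\alpha(j)|}Tch_{K,w}\hat t_{\alpha(j),Z}\|_K^{1/|\alpha(j)|}\leq e^{-M}\tau(Z,\theta)$, and $Tch_{K,w}\hat t_{\alpha(j),Z}$ is a legitimate candidate for $t^w_{\alpha(j),K}$ because it has the same leading homogeneous part. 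If you want to rescue your route, you should either prove your claimed uniform index-by-index comparison (a statement stronger than what is in the literature) or, as the paper does, invoke the Robin-function mechanism and Theorem 3.3 of \cite{BL2} in place of the direct multiplication argument.
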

\begin{proof}

 The proof follows closely the proof of Proposition 3.4 of \cite{BL2}. First, $t^w_{\alpha(j),K}\in M_{\alpha(j)}$, hence it is a candidate for $t_{\alpha(j),Z}$ so that 
 \begin{equation}\label{can}
||t_{\alpha(j),Z}||_Z^{\frac{1}{|\alpha|(j)|}}\leq ||t^w_{\alpha(j),K}||_Z^ {\frac{1}{|\alpha|(j)|}}.
\end{equation}
Let 
$$
p_j(z):=\frac{t^w_{\alpha(j),K}(z)}{||w^{|\alpha(j)|}t^w_{\alpha(j),K}||_K}.
$$
Then
$$
\frac{1}{|\alpha(j)|}\log |p_j(z)|=\frac{1}{|\alpha(j)|}
\log |t^w_{\alpha(j),K}(z)|-\frac{1}{|\alpha(j)|}\log|| w^{|\alpha(j)|}t^w_{\alpha(j),K}||_K.
$$
Since $||w^{|\alpha(j)|}p_j||_K=1$, combining this with the defining property of $Z$ we obtain
$$
 \frac{1}{|\alpha(j)|}\log ||t^w_{\alpha(j),K}||_Z\leq  M+\frac{1}{|\alpha(j)|}\log|| w^{|\alpha(j)|}t^w_{\alpha(j),K}||_K.
$$
 Taking exponentials and combining with (\ref{can}) we have 
$$
||t_{\alpha(j),Z}||_Z^{\frac{1}{|\alpha|(j)}}\leq ||w^{|\alpha(j)|}t^w_{\alpha(j),K}||_K^{\frac{1}{|\alpha(j)|}}e^M.
$$
This yields
\begin{equation}\label{liminf}
\tau(Z,\theta)\leq \liminf_{j \to \infty}||w^{|\alpha(j)|}t^w_{\alpha(j),K}||_K^{\frac{1}{|\alpha(j)|}}e^M.
 \end{equation}
The proof will be completed by showing that for all sequences satisfying (\ref{lim}) that
\begin{equation}\label{limsup2}
\tau(Z,\theta)\geq \limsup_{j \to \infty}||w^{|\alpha(j)|}t^w_{\alpha(j),K}||_K^{\frac{1}{|\alpha(j)|}}e^M.
 \end{equation}
 To this end, note that
$$
  \frac{1}{|\alpha(j)|}\log |t_{\alpha(j),Z}(z)|\leq  V_Z(z)+\frac{1}{|\alpha(j)|}\log||t_{\alpha(j),Z}||_Z.
$$
Taking the Robin function of both sides of the equation
we have
$$
\frac{1}{|\alpha(j)|}\log |\hat{t}_{\alpha(j),Z}(z)|\leq \bar \rho_Z(z)
+ \frac{1}{|\alpha(j)|}\log||t_{\alpha(j),Z}||_Z.
$$ 
Since $V_{K,Q}=V_Z+M$
on $\mathbb{C}^d\setminus Z$ we have $\bar \rho_{K,Q}=\bar \rho_Z +M$, hence 
$$
\frac{1}{|\alpha(j)|}\log |\hat{t}_{\alpha(j),Z}(z)|\leq \bar \rho_{K,Q}(z)-M
+ \frac{1}{|\alpha(j)|}\log||t_{\alpha(j),Z}||_Z.
$$
Taking limsup of both sides and recalling that the limit exists on the right side, we obtain
$$
\limsup_{j \to \infty}\frac{1}{|\alpha(j)|}\log |\hat{t}_{\alpha(j),Z}(z)|\leq \bar \rho_{K,Q}(z)-M
+ \log \tau(Z,\theta).
$$
By Theorem 3.3 of \cite{BL2},
$$
\limsup_{j \to \infty}||w^{|\alpha(j)|}Tch_{K,w}\hat{t}_{\alpha(j),Z}||_K^{\frac{1}{|\alpha_j|}}
\leq e^{-M}\tau(Z,\theta).
$$
This establishes (\ref{limsup2}) since $Tch_{K,w}\hat{t}_{\alpha(j),Z}$ is a candidate for 
$t^w_{\alpha(j),K}$.

\end{proof}

\begin{remark} This result remains valid if $K$ is only regular if we assume $V_{K,Q}$ is continuous for the proof of  Theorem 2.5 in \cite{BL2} is still valid in this setting to show $V_Z$ is continuous and $V_{K,Q}=V_Z+M$ on $\mathbb{C}^d\setminus Z$.

\end{remark}

The third and final step will require a bit more work. In the proof of the next result, we defer proofs of two technical lemmas to an appendix. 

\begin{proposition} \label{step3}
Suppose for $M\subset \mathbb{C}^{d-1}$ compact and locally regular and $w=e^{-Q}$ any continuous admissible weight function on $M$, we have that for any
$\theta' =(\theta'_2,... ,\theta'_d) \in\Sigma^{(d-1)}$, and for any sequence of $d-1$ multi-indices satisfying 
$\alpha'(j)=(\alpha'_2(j),...,\alpha'_d(j))$ with
$\lim_{j\rightarrow{\infty}}\frac{\alpha'(j)}{|\alpha'(j)|}\rightarrow{\theta'}$, 
$$\lim_{j\rightarrow{\infty}}||w^{|\alpha'(j)|}t^{w}_{\alpha'(j),M}||_M^{1/{|\alpha'(j)|}}$$
exists. Then for any regular, circled compact set $K\subset\mathbb{C}^d$ and
 for any sequence of $d$ multi-indices $\alpha(j)=(\alpha_1(j),...,\alpha_d(j))$ with  $j=1,2,...$ and
 $\frac{\alpha(j)}{|\alpha(j)|} \rightarrow\theta =(\theta_1,\theta')\in\Sigma^{(d)}$ where $\theta_1 <1$, 
 $$\lim_{j \to \infty}||t_{\alpha(j),K}||_K^\frac{1}{|\alpha(j)|}$$
exists.
\end{proposition}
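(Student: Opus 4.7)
The plan is to reduce the $d$-dimensional unweighted problem on the circled regular set $K$ to a $(d-1)$-dimensional weighted problem that the standing hypothesis directly handles. Since $K$ is circled, I take each Chebyshev polynomial $t_j := t_{\alpha(j),K}$ to be homogeneous of degree $n_j := |\alpha(j)|$: passing to the top homogeneous component preserves monicity in $M_{\alpha(j)}^{(d)}$ and does not increase the sup norm on a circled set, by the Fourier-in-$\lambda$ identity $\hat p(z) = \frac{1}{2\pi}\int_0^{2\pi} e^{-in_j\phi} p(e^{i\phi} z)\,d\phi$. Decomposing $t_j(z_1,z') = \sum_k z_1^k q_{j,k}(z')$ with $q_{j,k}$ homogeneous in $z'$ of degree $n_j - k$, the component $q_{j,\alpha_1(j)}$ is homogeneous of degree $m_j := |\alpha'(j)|$ with leading monomial $z'^{\alpha'(j)}$, and belongs to $M^{(d-1)}_{\alpha'(j)}$ by the choice of ordering $z_1 \prec z_2 \prec \cdots \prec z_d$.

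The pair $(M,w)$ on $\mathbb{C}^{d-1}$ to which I apply the hypothesis is built from the fiber structure of $K$. After rescaling so that $\max\{|z_1| : z \in K\} = 1$, set $M := \pi(K) \subset \mathbb{C}^{d-1}$ with $\pi(z_1,z_2,\ldots,z_d) := (z_2,\ldots,z_d)$, $h(z') := \max\{|z_1| : (z_1,z') \in K\}$, and $w(z') := h(z')^{\theta_1/(1-\theta_1)}$ (well-defined since $\theta_1 < 1$). One of the two technical lemmas deferred to the appendix will establish that $M$ is locally regular in $\mathbb{C}^{d-1}$, that $h$ is continuous on $M$, and that $w$ is a continuous admissible weight, using regularity of $K$ and its circled symmetry. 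Since $\alpha'(j)/|\alpha'(j)| \to \theta'' := \theta'/(1-\theta_1) \in \Sigma^{(d-1)}$, the hypothesis then produces the weighted limit $\tau^w(M,\theta'')$.

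For the upper bound, let $q_j^\star \in M^{(d-1)}_{\alpha'(j)}$ be the weighted Chebyshev minimizer for $(M,w)$. The lift $z_1^{\alpha_1(j)} q_j^\star(z')$ lies in $M^{(d)}_{\alpha(j)}$ with sup-norm $\max_{z' \in M} h(z')^{\alpha_1(j)} |q_j^\star(z')|$ on $K$. A direct computation gives $\alpha_1(j) - m_j\theta_1/(1-\theta_1) = (\alpha_1(j) - n_j\theta_1)/(1-\theta_1) = o(n_j)$; since $\log h$ is bounded on $M$, the factor $h^{\alpha_1(j)}$ differs from $w^{m_j} = h^{m_j\theta_1/(1-\theta_1)}$ by a multiplicative correction that is subexponential in $n_j$. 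Taking $n_j$-th roots and invoking the hypothesis yields $\limsup_j \tau_{K,\alpha(j)}^{1/n_j} \leq \tau^w(M,\theta'')^{1-\theta_1}$.

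The lower bound is the main obstacle. One seeks the matching estimate
$$h(z')^{\alpha_1(j)}\,|q_{j,\alpha_1(j)}(z')| \leq (1+o(1))^{n_j}\|t_j\|_K, \qquad z' \in M,$$
which would give $\|w^{m_j} q_{j,\alpha_1(j)}\|_M \leq (1+o(1))^{n_j}\|t_j\|_K$ after the same subexponential comparison, and hence $\liminf_j \tau_{K,\alpha(j)}^{1/n_j} \geq \tau^w(M,\theta'')^{1-\theta_1}$. The naive Cauchy extraction $q_{j,\alpha_1(j)}(z') = (2\pi r^{\alpha_1(j)})^{-1}\int_0^{2\pi} e^{-i\alpha_1(j)\phi} t_j(re^{i\phi},z')\,d\phi$ at $r = h(z')$ requires $|t_j(re^{i\phi},z')|$ to be uniformly $\|t_j\|_K$-bounded in $\phi$, whereas $(re^{i\phi},z') \in K$ only holds for those $\phi$ with $e^{-i\phi}z' \in K_r := \{w' : (r,w') \in K\}$, since $K$ is jointly circled but not Reinhardt. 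The second appendix lemma will supply the required control, combining the homogeneity identity $t_j(re^{i\phi},z') = e^{in_j\phi}t_j(r,e^{-i\phi}z')$ (from homogeneity and the circled action) with the Bernstein–Walsh estimate $|t_j(w)| \leq \|t_j\|_K e^{n_jV_K(w)}$ valid since $V_K$ is continuous (by regularity of $K$), together with the circled invariance $V_K(e^{i\phi}\cdot) = V_K(\cdot)$, and leveraging compactness of $M$ and continuity of $V_K$ to keep the correction factor subexponential.
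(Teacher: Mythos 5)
Your reduction is genuinely different from the paper's, but it has a gap at exactly the step you flag as "the main obstacle," and the mechanism you sketch for closing it cannot work. To bound the single slice $q_{j,\alpha_1(j)}$ you need a Cauchy/coefficient estimate $\sup_{\phi}|t_j(re^{i\phi},z')|\lesssim ||t_j||_K$ at $r=h(z')$, and this is a Reinhardt-type symmetry that a merely circled $K$ does not have. The identity $t_j(re^{i\phi},z')=e^{in_j\phi}t_j(r,e^{-i\phi}z')$ transfers the problem to the points $(r,e^{-i\phi}z')$, but circledness only guarantees that \emph{one} rotation of $z'$ (the one undoing the phase of the $z_1$-coordinate of an actual point of $K$ over $z'$) lands in $K$; for the remaining $\phi$ these points lie at a fixed positive distance from $K$, so $V_K$ there is bounded below by a positive constant (continuity of $V_K$ gives smallness only near $K$, and circled invariance of $V_K$ does not move these points closer). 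Bernstein--Walsh then yields a factor $e^{n_j c}$ with $c>0$, not $(1+o(1))^{n_j}$, and the lower bound collapses. This is precisely why the paper never extracts a coefficient: it first excises $\{|z_1|<\eta\}$ (Lemma \ref{L}, which shows $\widehat{S}\supset K_{\epsilon}$ for $S=K\setminus\{|z_1|<\eta\}$), and then uses full homogeneity to write $t_{\alpha,S}(z)=z_1^{|\alpha|}r_{\alpha'}(z_2/z_1,\dots,z_d/z_1)$ \emph{exactly}, so the passage to $d-1$ variables is an identity via the map $\phi(z)=(z_2/z_1,\dots,z_d/z_1)$ rather than a slice estimate.

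Two further points in your setup are also unsupported and not minor. First, the inductive hypothesis requires the $(d-1)$-dimensional set to be \emph{locally} regular, and your $M=\pi(K)$ is only asserted to be so; since $K$ itself is only regular, even an image lemma of the type of Lemma \ref{rafal} (which needs local regularity of the source at the point) does not apply — the paper has to approximate $\widehat S$ from outside by locally regular homogeneous polynomial polyhedra before mapping by $\phi$. Continuity and admissibility of $h$ and $w=h^{\theta_1/(1-\theta_1)}$ are likewise nontrivial for circled, non-Reinhardt $K$. Second, your comparison $h^{\alpha_1(j)}=w^{m_j}h^{\delta_j}$ with $\delta_j=o(n_j)$ is subexponential only if $\log h$ is bounded on $M$; but $h$ can vanish on $\pi(K)$ (take $K$ the closed unit ball, where $h(z')=(1-|z'|^2)^{1/2}$), so both your upper and lower bounds degenerate near the small fibers — again the role of the paper's excision of $\{|z_1|<\eta\}$, which keeps the relevant quantity ($|z_1|$ on $L$) bounded below. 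In short, the coordinate-projection-plus-fiber-radius-weight route would need substantial new lemmas that are not the paper's appendix lemmas, and the central slice estimate appears false in general for circled sets.
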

\begin{proof} Since Chebyshev polynomials for $K$ and $\hat K$ are the same, we may assume $K=\hat K$. Moreover,  for circled $K$, $\hat K$ coincides with the homogeneous polynomial hull. It follows from (\ref{krho}) that $K=\{z\in \C^d: \bar \rho_K \leq 0\}$ where $\bar \rho_K $ is the logarithmic  
homogeneous Robin function of $K$. For $\epsilon >0$, we set $K_{\epsilon}:=\{z\in \C^d: \bar \rho_K(z) \leq-\epsilon\}$. Using Lemma \ref{L} in the appendix, for some $\eta =\eta(\epsilon) >0$, the polynomial convex hull 
$\widehat{S}$ of $S=S(\epsilon,\eta):=K\setminus \{|z_1|<\eta\}$ contains $K_{\epsilon}$. Note that $S$ is compact, circled and nonpluripolar. Moreover, it is known that we can approximate $\hat S$ from the outside by a sequence of locally regular homogeneous polynomial polyhedra (cf., p. 93 of \cite{BL2}). Thus we may assume $\hat S$ is locally regular and we still have the inclusions $$K_{\epsilon}\subset \widehat{S}\subset
\hat K.$$
Thus we obtain the following inequalities on Chebyshev polynomials:
\begin{equation}\label{tch}
||t_{\alpha,K_{\epsilon}}||_{K_{\epsilon}}\leq
||t_{\alpha,S}||_S\leq||t_{\alpha,K}||_K.
\end{equation}
The scaling $z\rightarrow e^{-\epsilon}z$ maps $K$ onto $K_{\epsilon}$ which yields that
\begin{equation}\label{tch2}
||t_{\alpha,K_{\epsilon}}||_{K_{\epsilon}}=e^{-\epsilon|\alpha|}||t_{\alpha,K}||_K. 
\end{equation}
Thus if, under condition (\ref{lim}) that $\lim_{j \to \infty}\frac{\alpha(j)}{|\alpha(j)|}=\theta \in \Sigma^{(d)}$, we knew that 
\begin{equation}\label{tt}
\lim_{j \to \infty}||t_{\alpha(j),S}||_S^{\frac{1}{|\alpha(j)|}}
\end{equation}
exists, then $\lim_{j \to \infty}||t_{\alpha(j),K}||_K^{\frac{1}{|\alpha(j)|}}$ also exists, since by (\ref{tch}) and (\ref{tch2})
$$\liminf_{j \to \infty}||t_{\alpha(j),K}||_K^{\frac{1}{|\alpha(j)|}}\geq \lim_{j \to \infty}||t_{\alpha(j),S}||_S^{\frac{1}{|\alpha(j)|}}\geq
\limsup_{j \to \infty}||t_{\alpha(j),K_{\epsilon}}||_{K_\epsilon}^{\frac{1}{|\alpha(j)|}}$$
$$=e^{-\epsilon|\alpha(j)|}\limsup_{j \to \infty}||t_{\alpha(j),K}||_K^{\frac{1}{|\alpha(j)|}},$$
which holds for every $\epsilon>0$. 

It remains to show that the limit in (\ref{tt}) exists. Suppose $\alpha=(\alpha_1,\alpha_2,...,\alpha_d)$ is a $d$ multiindex. Recall we use $\prec$ satisfying (by abuse of notation) $z_1\prec  z_2 \cdots \prec z_d$. Then we may write $t_{\alpha,S}$ as a sum of monomials using this lexicographic order:
$$t_{\alpha,S}(z)=z_1^{\alpha_1}z_2^{\alpha_2}\cdots z_d^{\alpha_d}+...+cz_1^{|\alpha|}.$$
Thus 
$$|t_{\alpha,S}(z)|=|z_1|^{|\alpha|} |(\frac{z_2}{z_1})^{\alpha_2}\cdots (\frac{z_d}{z_1})^{\alpha_d}+...+c|=:|z_1|^{|\alpha|} \ |r_{{\alpha '}}(t_1,...,t_{d-1})| $$ 
where $t_j=\frac{z_{j+1}}{z_1}$ for $j=1,...,d-1$ and ${\alpha}'=
(\alpha_2,...,\alpha_d)$ is a $d-1$ multiindex. Consider the map $\phi :\mathbb{C}^d\rightarrow  \mathbb{C}^{d-1}$
given by
$$(z_1,...,z_d)\rightarrow \Bigl(\frac{z_2}{z_1},...,\frac{z_d}{z_1}\Bigr)=(t_2,...,t_{d-1}).$$
We let $L=\phi(\hat S)$. Then $L$ is compact since $\hat S$ is and $|z_1|\geq\eta$ on $\hat S$. Moreover, since $\hat S$ is contained in $|z_1|\geq \eta$, $\phi$ is holomorphic with complex Jacobian of maximal rank in a neighborhood of $\hat S$; thus, by Lemma \ref{rafal} in the appendix, $L$ is locally regular. 
The lexicographic ordering in $\mathbb{C}^{d-1}$ is determined by $t_1\prec t_2 \cdots \prec t_{d-1}$. Then the polynomial 
$r_{\alpha '}\in M_{\alpha '}$ and hence it is a candidate for $t_{\alpha ',L}$. Since $z^{\alpha}$ is a candidate for $t_{\alpha,S}$ we have $||t_{\alpha,S}||_S \leq B^{|\alpha|}$ for some $B>0$. Also, since $|z_1|$ is bounded below by a positive constant on $L$,
$||r_{\alpha '}||_L\leq C^{|\alpha|}$ for a positive constant $C$.

We now consider a sequence of $d$ multi-indices $\alpha (j)=(\alpha(j)_1,...,\alpha(j)_d), \  j=1,2,...$ such that  
 $$\lim_{j\to \infty} \frac{\alpha(j)}{|\alpha(j)|}\rightarrow \theta=(\theta_1,...,\theta_d).$$       
We assume that $\theta_1<1$. Then $\alpha' (j)=(\alpha(j)_2,...,\alpha(j)_d), \ j=1,2,..$
is a sequence of $d-1$ multiindices and we set $\theta'=(\theta_2,...,\theta_d)$. Note that $\theta'\not \in \Sigma^{(d-1)}$ but $\frac{\theta'}{|\theta'|} \in \Sigma^{(d-1)}$. We will show that (\ref{tt}) exists by showing that
\begin{equation}\label{threeten}
\lim_{j \to \infty}|||z_1|^{\alpha(j)|}r_{\alpha'(j)}(t_1,...,t_{d-1})||_L^{\frac{1}{|\alpha(j)|}}
\end{equation}
exists.

To this end, note that
\begin{equation}\label{thetas} \lim_{j \to \infty} \frac{|\alpha(j)|}{|\alpha'(j)|}= \frac{\theta_1+...+\theta_d}{\theta_2+...+\theta_d}=\frac{1}{|\theta'|} \end{equation}
so that
\begin{equation}\label{uniform}
|z_1|^{\frac{|\alpha(j)|}{|\alpha'(j)|}}|z_1|^{\frac{-1}{|\theta'|} }\rightarrow 1
\end{equation}
uniformly on $L$. Let $v=v(t_1,...,t_{d-1}):=|z_1|^{\frac{1}{|\theta'|} }$ for $(t_1,...,t_{d-1})\in L$; this is a positive, continuous function and hence an admissible weight on $L$. 
Thus by the hypotheses of the proposition, we have the existence of the limit
$$\lim_{j \to \infty}||v^{\alpha'(j)}t_{\alpha'(j),L}||_L^{\frac{1}{|\alpha'(j)|}}=\tau^v(\frac{\theta'}{|\theta'|} ,L).$$
Using this together with (\ref{uniform}) we deduce that
\begin{equation}\label{22}
\lim_{j \to \infty}|||z_1|^{|\alpha(j)|}t_{\alpha'(j),L}||_L^{\frac{1}{|\alpha'(j)|}}=\lim_{j \to \infty}||(|z_1|^{\frac{|\alpha(j)|}{|\alpha'(j)|}})^{|\alpha'(j)|}t_{\alpha'(j),L}||_L^{\frac{1}{|\alpha'(j)|}}=\tau^v(\frac{\theta'}{|\theta'|} ,L)
\end{equation}

We want to show that
$$\lim_{j \to \infty}|||z_1|^{|\alpha(j)|}r_{\alpha'(j)}(t_1,...,t_{d-1})||_L^{\frac{1}{|\alpha(j)|}} $$
exists; from (\ref{thetas}) it suffices to show (recall (\ref{threeten})) that 
$$\lim_{j \to \infty}|||z_1|^{|\alpha(j)|}r_{\alpha'(j)}(t_1,...,t_{d-1})||_L^{\frac{1}{|\alpha'(j)|}} $$
exists. We proceed by contradiction. If the limit does not exist there is a sequence $\alpha(j)$
such that
$$\lim_{j \to \infty}|||z_1|^{|\alpha(j)|}r_{\alpha'(j)}(t_1,...,t_{d-1})||_L^{\frac{1}{|\alpha'(j)|}} >\tau^v(\frac{\theta'}{|\theta'|} ,L).  $$
Such a limit cannot be smaller than $\tau^v(\frac{\theta'}{|\theta'|} ,L)  $
by (\ref{22}). Thus 
 $$|||z_1|^{|\alpha(j)|}|r_{\alpha'(j)}(t_1,...,t_{d-1})|||_L>|||z_1|^{|\alpha(j)|}t_{\alpha'(j),L}||_L$$
for $|\alpha(j)|$ large in the sequence.
Composing with $\phi$ we have
$$||t_{\alpha(j),S}||_S>|||z_1|^{|\alpha(j)|}t_{\alpha'(j),L}(\phi)||_S$$
which contradicts the minimality of the Chebyshev polynomials $t_{\alpha(j),S}$ 
since the polynomial $z_1^{|\alpha(j)|}t_{\alpha'(j),L}(\phi)$ is in $M_{\alpha(j)})$.

\end{proof}

This proves Step 3 in all cases except the case $\theta=(1,0,0,...,0)$. We verify this 
directly in the next lemma.

\begin{lemma}\label{100} Let $K\subset \C^d$ be compact, circled and regular. Let $\{\alpha(j)\}$ for $j=1,2...$
 be a sequence of $d-$multi-indices with
$$ \lim_{j \to \infty}\frac{\alpha(j)}{|\alpha(j)|}=(1,0,0,...,0).$$
Then 
$$\lim_{j \to \infty} ||t_{\alpha(j),K}||_K^\frac{1}{|\alpha(j)|}$$
exists and is the same for all such $\{\alpha(j)\}$.

\end{lemma}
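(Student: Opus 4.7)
I will show $\lim_j \|t_{\alpha(j),K}\|_K^{1/|\alpha(j)|} = a := \|z_1\|_K$, independent of the sequence (if $a=0$ the statement is trivial). As in the proof of Proposition~\ref{step3}, replacing $K$ by its polynomial hull leaves Chebyshev norms unchanged, so I assume $K = \hat K = K_\rho = \{z : \bar\rho_K(z) \le 0\}$, with $\bar\rho_K$ continuous by regularity. Standard $S^1$-averaging (integrating $p \mapsto \int_0^{2\pi} p(e^{i\theta} z) e^{-i|\alpha(j)|\theta}\, d\theta/(2\pi)$) lets me take $t_{\alpha(j),K}$ homogeneous of degree $n_j := |\alpha(j)|$. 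For the \emph{upper} bound, $z^{\alpha(j)} \in M_{\alpha(j)}$ is a competitor and $\|z^\beta\|_K \le \prod_i \|z_i\|_K^{\beta_i}$, so $\|t_{\alpha(j),K}\|_K \le a^{\alpha_1(j)} B^{|\alpha'(j)|}$ with $B := \max_{i \ge 2} \|z_i\|_K$; since $|\alpha'(j)|/n_j \to 0$, $\limsup_j \tau_{K,\alpha(j)} \le a$.

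For the \emph{lower} bound, fix $z^* \in K$ with $|z_1^*| = a$ and set $t^* := z^{*\prime}/z_1^* \in \mathbb{C}^{d-1}$; log-homogeneity of $\bar\rho_K$ gives $\bar\rho_K(1, t^*) = -\log a$. Given $\epsilon > 0$, continuity of $\bar\rho_K$ furnishes $r > 0$ such that $\bar\rho_K(1, t^* + s) \le -\log a + \epsilon$ whenever $|s_k| \le r$ for every $k$. Log-homogeneity then forces $(w_1, w_1(t^* + s)) \in K$ whenever $|w_1| \le ae^{-\epsilon}$ and $|s_k| \le r$. Writing $\tilde p_j(u) := t_{\alpha(j),K}(1, u)$, homogeneity gives $t_{\alpha(j),K}(w_1, w_1(t^*+s)) = w_1^{n_j}\, \tilde p_j(t^* + s)$, whence
\[
\|t_{\alpha(j),K}\|_K \;\ge\; (ae^{-\epsilon})^{n_j} \sup_{|s_k|\le r} |\tilde p_j(t^* + s)|.
\]

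The crux is that the Taylor coefficient of $s^{\alpha'(j)}$ in $\tilde p_j(t^* + s)$ equals $1$. Every monomial $z^\beta$ of $t_{\alpha(j),K}$ has $|\beta| = n_j$ and $\beta \preceq \alpha(j)$; if additionally $\beta' \ge \alpha'(j)$ componentwise with a strict inequality at some coordinate $i_0 \ge 2$, then letting $i_1$ be the largest such coordinate, $\beta_{i_1} > \alpha_{i_1}(j)$ while $\beta_i = \alpha_i(j)$ for $i > i_1$, and reading the $d$-variable lex order from $z_d$ downward forces $\beta \succ \alpha(j)$, a contradiction. Hence only $\beta' = \alpha'(j)$ (with coefficient $1$) in $\tilde p_j$ is componentwise $\ge \alpha'(j)$, giving $\partial^{\alpha'(j)} \tilde p_j \equiv \alpha'(j)!$ and the stated Taylor coefficient. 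Cauchy's estimate on the polydisk $\{|s_k| \le r\}$ then yields $\sup_{|s_k|\le r} |\tilde p_j(t^*+s)| \ge r^{|\alpha'(j)|}$; raising the displayed inequality to power $1/n_j$ and using $|\alpha'(j)|/n_j \to 0$ gives $\liminf_j \tau_{K,\alpha(j)} \ge ae^{-\epsilon}$. Letting $\epsilon \downarrow 0$ closes the gap.

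The main obstacle is the combinatorial step forcing the shifted Taylor coefficient to be exactly $1$: naive single-point evaluation of $\tilde p_j$ at $t^*$ can vanish (for $K = \{|z_1|^2 + |z_2 - z_1|^2 \le 1\}$ and $\alpha(j) = (n,1)$, the optimal $\tilde p_j$ is $u - 1$, which vanishes at $t^* = 1$), so Cauchy around the origin is useless. The lex-order argument is what guarantees that the mixed partial $\partial^{\alpha'(j)}$ isolates a clean coefficient of $1$, enabling a Cauchy lower bound on the polydisk shifted to $t^*$ rather than to $0$; everything else reduces to continuity of $\bar\rho_K$ and routine rescaling.
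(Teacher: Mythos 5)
Your proof is correct, and it takes a genuinely different route from the paper's. The paper deduces Lemma \ref{100} from Zaharjuta's interior theory: it first proves two comparison inequalities relating the $\liminf$ and $\limsup$ of $\tau_{K,\alpha}$ along $\alpha/|\alpha|\to(1,0,\dots,0)$ to the corresponding limits of the interior directional constants $\tau(K,\theta')$ as $\theta'\to(1,0,\dots,0)$ with $\theta'\in\Sigma^0$; it gets the upper bound (\ref{Rlim}) by factoring competitors through $z_1^{n_j(1-\epsilon_j)}$; and it gets the matching lower bound (\ref{lastep}) by inscribing ellipsoids $E(A,r)$ in $K$ (after the shear $(z_1,z')\mapsto(z_1,z'-z_1d'/c)$ when $\max_K|z_1|$ is not attained on the slice $z'=0$) and computing $\tau(E,\theta)$ explicitly. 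You instead work directly with the sequence $\alpha(j)$ and never use the interior constants: the upper bound comes from the monomial competitor $z^{\alpha(j)}$, and the lower bound from continuity and logarithmic homogeneity of $\bar\rho_K$ (which place the family $(w_1,w_1(t^*+s))$, $|w_1|\le ae^{-\epsilon}$, $|s_k|\le r$, inside $K=\{\bar\rho_K\le 0\}$) combined with the lex-order observation that the coefficient of $s^{\alpha'(j)}$ in $\tilde p_j(t^*+s)$ equals $1$, so a Cauchy estimate on the shifted polydisk yields $\|t_{\alpha(j),K}\|_K\ge (ae^{-\epsilon})^{n_j}r^{|\alpha'(j)|}$ and hence $\liminf_j\tau_{K,\alpha(j)}\ge a$. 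That combinatorial step is sound and rests on the same property of the graded order $z_1\prec\cdots\prec z_d$ that the paper itself uses in Proposition \ref{step3} (every degree-$n_j$ monomial $z^\beta$ with $\beta\prec\alpha(j)$ has $\beta'\prec\alpha'(j)$ in the $(d-1)$-variable graded order, so it cannot dominate $\alpha'(j)$ componentwise). Your route buys a self-contained, quantitative identification of the limit as $\|z_1\|_K$ without Zaharjuta's continuity of $\tau(K,\cdot)$ on $\Sigma^0$, the ellipsoid computation, or the change of coordinates; the paper's route, in exchange, also records the relation between the boundary limit and the interior limits, which it reuses. Two cosmetic remarks: you only need $\bar\rho_K(1,t^*)\le-\log a$, which is immediate from $z^*\in K$ (equality does hold, but requires the maximality of $|z_1^*|$); and the positivity of $a$ and of $\max_{i\ge2}\|z_i\|_K$ is automatic because a regular compact set is nonpluripolar, so the degenerate cases you wave off cannot occur.
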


\begin{proof} We begin with some general facts in Zaharjuta's paper \cite{Z}. In Lemma 3, he shows for an arbitrary compact set $K$ that for $\theta \in \Sigma \setminus \Sigma^0$ one has  
$$ \tau_{-}(K,\theta):=\liminf_{\frac{\alpha}{|\alpha|}\to \theta}\tau_{K,\alpha}= \liminf_{\theta'\to \theta, \ \theta'\in \Sigma^0}\tau(K,\theta').$$
For the reader's convenience, we give his argument to show that
\begin{equation}\label{liminf} \liminf_{\frac{\alpha}{|\alpha|}\to \theta}\tau_{K,\alpha}\leq  \liminf_{\theta'\to \theta, \ \theta'\in \Sigma^0}\tau(K,\theta').\end{equation}
Indeed, choosing a sequence $\{\theta_k\}\subset \Sigma^0$ with $\theta_k \to \theta$, we can find a sequence of integers $\{i_k\}$ with $i_k\to \infty$ such that
$$|\frac{\alpha(i_k)}{|\alpha(i_k)|}-\theta_k|<1/k \ \hbox{and} \ |\tau_{K,\alpha(i_k)}-\tau(K,\theta_k)|< 1/k.$$
Then
$$\liminf_{\frac{\alpha}{|\alpha|}\to \theta}\tau_{K,\alpha}\leq \liminf_{k\to \infty} \tau_{K,\alpha(i_k)}=\liminf_{k\to \infty}\tau(K,\theta_k).$$
Since this holds for any sequence $\{\theta_k\}\subset \Sigma^0$ with $\theta_k \to \theta$, (\ref{liminf}) follows. 

A similar proof shows that for $\theta \in \Sigma \setminus \Sigma^0$
\begin{equation}\label{limsup} \tau(K,\theta):= \limsup_{\frac{\alpha}{|\alpha|}\to \theta}\tau_{K,\alpha}\geq  \limsup_{\theta'\to \theta, \ \theta'\in \Sigma^0}\tau(K,\theta'). \end{equation}
To see this, we first choose a sequence $\{\theta_k\}\subset \Sigma^0$ with $\theta_k \to \theta$, and we then find a sequence of integers $\{i_k\}$ with $i_k\to \infty$ such that
$$|\frac{\alpha(i_k)}{|\alpha(i_k)|}-\theta_k|<1/k \ \hbox{and} \ |\tau_{K,\alpha(i_k)}-\tau(K,\theta_k)|< 1/k.$$
Then
$$\limsup_{\frac{\alpha}{|\alpha|}\to \theta}\tau_{K,\alpha}\geq \limsup_{k\to \infty} \tau_{K,\alpha(i_k)}=\limsup_{k\to \infty}\tau(K,\theta_k).$$
Since this holds for any sequence $\{\theta_k\}\subset \Sigma^0$ with $\theta_k \to \theta$, (\ref{limsup}) follows.

For easier reading, we will write our Chebyshev constants as $\tau_{\alpha}(K)$ since the multiindices $\alpha$ will be important. First, since $z_1 \prec ... \prec z_d$ and $K$ is circled, to compute $\tau_{(n,0,...,0)}(K)$, we minimize sup norms on $K$ over the {\it single} homogeneous polynomial $z_1^n$: thus $\tau_{(n,0,...,0)}(K)=\max_{(z_1,z')\in K} |z_1|$ and 
$$\lim_{n\to \infty}\tau_{(n,0,...,0)}(K)=\max_{(z_1,z')\in K} |z_1|$$
which is positive since $K$ is not pluripolar. Take a sequence $\alpha_j$ with $n_j:=|\alpha_j|\to \infty$ and $\theta_j:=\frac{\alpha_j}{|\alpha_j|}\to (1,0,...,0)$. We can write
                  $$\theta_j=(1-\epsilon_j,a_j) $$
                  where $\epsilon_j= |a_j| \to 0$. We proceed as follows. Writing $z=(z_1,z')=(z_1,z_2,...,z_d)\in \C^d$, competitors for $\tau_{(0,n_j a_j)}(K)$ are homogeneous polynomials of the form
$$p_{0,n_j a_j}(z_1,z'):=z'^{n_ja_j}+...+az_1^{n_j\epsilon_j}.$$
Consider $\tau_{(n_j(1-\epsilon_j),n_ja_j)}(K)$ where we minimize sup norms on $K$ over the homogeneous polynomials of the form
$$z_1^{n_j(1-\epsilon_j)}z'^{n_ja_j}+...+ az_1^{n_j}.$$ 
Some of these can be factored as
                  $$z_1^{n_j(1-\epsilon_j)}z'^{n_ja_j}+...+ az_1^{n_j}=z_1^{n_j(1-\epsilon_j)}(z'^{n_j a_j}+...+az_1^{n_j\epsilon_j});$$
                   i.e., the competitors for $\tau_{(n_j(1-a_j),n_j\epsilon_j)}(K)$ include $z_1^{n_j(1-\epsilon_j)}$ times the competitors for $\tau_{(0,n_j a_j)}(K)$. Hence
                    $$\tau_{(n_j(1-\epsilon_j),n_j a_j)}(K)^{n_j}\leq \tau_{(0,n_j a_j)}(K)^{n_j\epsilon_j}\cdot [\max_{(z_1,z')\in K} |z_1|^{n_j(1-\epsilon_j)}]$$
                  showing that 
                  $$\limsup_{j\to \infty}\tau_{(n_j(1-\epsilon_j),n_j a_j)}(K)\leq \limsup_{j\to \infty}\bigl( \max_{(z_1,z')\in K} |z_1|^{1-\epsilon_j}\cdot \tau_{(0,n_j a_j)}(K)^{\epsilon_j}\bigr)=\max_{(z_1,z')\in K} |z_1|.$$
                  Thus we have shown that 
                  \begin{equation}\label{Rlim} \limsup_{\frac{\alpha}{|\alpha|}\to (1,0,...,0)}\tau_{K,\alpha}\leq \max_{(z_1,z')\in K} |z_1|=\lim_{n\to \infty}\tau_{(n,0,...,0)}(K).\end{equation}

We will show that  
\begin{equation} \label{lastep} \liminf_{\theta \to (1,0,...,0), \ \theta \in \Sigma^0} \tau(K,\theta)\geq \lim_{n\to \infty}\tau_{(n,0,...,0)}(K).\end{equation}
Given this, from (\ref{liminf}), $\liminf_{\frac{\alpha}{|\alpha|}\to (1,0,...,0)}\tau_{K,\alpha}= \liminf_{\theta \to (1,0,...,0), \ \theta \in \Sigma^0} \tau(K,\theta)$ so that 
$$\liminf_{\frac{\alpha}{|\alpha|}\to (1,0,...,0)}\tau_{K,\alpha}\geq  \lim_{n\to \infty}\tau_{(n,0,...,0)}(K).$$
On the other hand, from (\ref{limsup}), $\limsup_{\frac{\alpha}{|\alpha|}\to (1,0,...,0)}\tau_{K,\alpha}\geq  \limsup_{\theta \to (1,0,...,0), \ \theta \in \Sigma^0}\tau(K,\theta)$ and (\ref{Rlim}) gave us 
$$\lim_{n\to \infty}\tau_{(n,0,...,0)}(K)\geq \limsup_{\frac{\alpha}{|\alpha|}\to (1,0,...,0)}\tau_{K,\alpha}.$$
Putting these last two displayed inequalities together gives us that
$$\lim_{\frac{\alpha}{|\alpha|}\to (1,0,...,0)}\tau_{K,\alpha}$$
exists and equals $\lim_{n\to \infty}\tau_{(n,0,...,0)}(K)$.

It remains to prove (\ref{lastep}). We may assume $\hat K =K=\{z\in \C^d: \bar \rho_K \leq 0\}$ and the intersection of $K$ with any complex line through the origin is a closed disk centered at the origin in this complex line of positive radius. Suppose first that for $K\cap \{z'=0\}=\{(z_1,0)\in K\}$ we have
$$\max\{|z_1|: (z_1,0) \in K\}=\max_{(z_1,z)\in K}|z_1|=:R.$$
Recall $\lim_{n\to \infty}\tau_{(n,0,...,0)}(K)=\max_{(z_1,z')\in K}|z_1|=R$. We can use interior approximation by an ellipsoid to get the lower bound for the $(1,0,...,0)$ Chebshev constant of $K$. To be precise, by our hypothesis, for $r<R$, the closed disk 
$$\{(z_1,0):|z_1|\leq r\}$$
is contained in the interior of $K$ and hence for some constant $A=A(r)$ the ellipsoid
$$E=E(A,r):=\{(z_1,z'): |z_1|^2/r^2 + |z'|^2/A^2 \leq 1\}$$
is contained in the interior of $K$. Thus for all $\theta \in \Sigma^0$ we have 
$$\tau(K,\theta) \geq \tau(E,\theta).$$
Explicit calculation gives that
$$\lim_{\theta \to (1,0,...,0), \ \theta \in \Sigma^0} \tau(E,\theta)= r.$$
Hence
$$\liminf_{\theta \to (1,0,...,0), \ \theta \in \Sigma^0} \tau(K,\theta)\geq r$$
for all $r<R$ so that 
$$\liminf_{\theta \to (1,0,...,0), \ \theta \in \Sigma^0} \tau(K,\theta)\geq R=\lim_{n\to \infty}\tau_{(n,0,...,0)}(K),$$
establishing (\ref{lastep}) in this case.

Finally, suppose that 
$$\max\{|z_1|: (z_1,0) \in K\} < \max_{(z_1,z')\in K}|z_1|.$$
Let the right-hand-side be attained at the point $(c,d')\in K$. Consider the change of coordinates
$(\tilde z_1,\tilde z'):=(z_1,z'- z_1d'/c)$ and let $\tilde K$ be the image of $K$. The Chebyshev constants of $K$ and $\tilde K$ are the same as this transformation preserves the classes of polynomials used to define
the directional Chebyshev constants (recall $z_1$ was first in the order $\prec$). But now for $\tilde K\cap \{\tilde z'=0\}=\{(\tilde z_1,0)\in \tilde K\}$ we have
$$|c|=\max\{|\tilde z_1|: (\tilde z_1,0) \in \tilde K\}=\max_{(\tilde z_1,\tilde z')\in \tilde K}|\tilde z_1|$$
and we are back in the first case. Thus we have established (\ref{lastep}) in all cases: 
$$\liminf_{\theta \to (1,0,...,0), \ \theta \in \Sigma^0} \tau(K,\theta)\geq R=\lim_{n\to \infty}\tau_{(n,0,...,0)}(K).$$

\end{proof}

We summarize the steps of the proof of Theorem \ref{maint}: 

\begin{proof} Recall that for $K\subset \C$ compact (not necessarily regular) and $w$ admissible on $K$, the limit $\lim_{n\to \infty} \tau_{K,n}^w$ of weighted Chebyshev constants 
$$\tau_{K,n}^w:=\bigl(\inf \{||w^np_n||_K: p_n(z) = z^n + ...\}\bigr)^{1/n}$$
exists (cf., Theorem III.3.1 in \cite{SaTo}). This begins the induction; Theorem \ref{maint} holds if $d=1$ in both the weighted and unweighted ($w\equiv 1$) setting. We remark that in $\C$, 
the notions of regularity and local regularity coincide.

From Proposition \ref{step3} and Lemma \ref{100}, the limit (\ref{wbdrydirl}) in the weighted case existing for $K$ compact and locally regular in $\C^{d-1}$ and $w$ admissible and  and continuous on $K$ implies the limit in (\ref{bdrydirl}) exists for $K$ circled and regular in $\C^d$. Then, since $K\subset \C^d$ regular implies the circled set $K_{\rho}$ is regular in $\C^d$, from Proposition \ref{step1}, the limit in (\ref{bdrydirl}) existing for $K$ circled and regular in $\C^d$ implies the limit in (\ref{bdrydirl}) exists for $K$ compact and regular in $\C^d$. Finally, since $K\in\mathbb{C}^d$ locally regular and $w=e^{-Q}$ continuous implies $V_Z$ is continuous where recall
$$Z:=\{z\in\mathbb{C}^d : V_{K,Q}(z) \leq ||V_{K,Q}||_K\},$$
 from Proposition \ref{step2}, the limit in (\ref{bdrydirl}) existing for $K$ compact and regular in $\C^d$ implies the limit (\ref{wbdrydirl}) in the weighted case exists for $K$ compact and locally regular in $\C^d$ and $w$ admissible and continuous on $K$. This completes the inductive step.

\end{proof}

\section{Appendix}  We state and proof Lemmas \ref{L} and \ref{rafal} used in the proof of Proposition \ref{step3}.

\begin{lemma} \label{L} Let $K\subset \C^d$ be a regular, circled compact set with $\hat K =K=\{z\in \C^d: \bar \rho_K \leq 0\}$. Fix $\epsilon >0$ and let $K_{\epsilon}:=\{z\in \C^d: \bar \rho_K(z) \leq-\epsilon\}$. Then for some $\eta =\eta(\epsilon) >0$, the polynomial convex hull 
$\widehat{S}$ of $S:=S(\epsilon,\eta)=K\setminus \{|z_1|<\eta\}$ contains $K_{\epsilon}$.

\end{lemma}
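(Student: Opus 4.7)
The plan is to choose $\eta=\eta(\epsilon)>0$ small enough that for every $z=(z_1,z')\in K_\epsilon$ with $|z_1|<\eta$, the complex line $\{(z_1+s,z'):s\in\C\}$ contains a circle lying in $S$ that encircles $z$. Once such an $\eta$ is in hand, the maximum modulus principle applied to $s\mapsto p(z_1+s,z')$ on the enclosed closed disk immediately yields $|p(z)|\leq\|p\|_S$ for every polynomial $p$, so $z\in\hat S$; the remaining points $z\in K_\epsilon$ with $|z_1|\geq\eta$ already lie in $S\subset\hat S$ automatically.

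To choose $\eta$, recall that since $K$ is regular and circled, the Robin function $\bar\rho_K$ is continuous on $\C^d$ (Corollary 4.4 of \cite{BLM}, cited in Section~3). The set $K_\epsilon$ is compact with $\bar\rho_K\leq -\epsilon$ on it, so uniform continuity of $\bar\rho_K$ on a fixed bounded neighborhood of $K_\epsilon$ allows us to pick $\eta>0$ so that, for every $(z_1,z')\in K_\epsilon$ and every $s\in\C$ with $|s|\leq 2\eta$,
$$
\bar\rho_K(z_1+s,z')\leq\bar\rho_K(z_1,z')+\tfrac{\epsilon}{2}\leq-\tfrac{\epsilon}{2}<0,
$$
and in particular $(z_1+s,z')\in K$.

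Now fix $z=(z_1,z')\in K_\epsilon$ with $|z_1|<\eta$ and set $r:=\eta+|z_1|<2\eta$. The affine disk $D:=\{(z_1+s,z'):|s|\leq r\}$ lies in $K$ by the uniform choice above, and on its boundary the reverse triangle inequality gives $|z_1+s|\geq r-|z_1|=\eta$, so $\partial D\subset K\cap\{|z_1|\geq\eta\}=S$. For any polynomial $p$, $s\mapsto p(z_1+s,z')$ is a holomorphic function of $s$, and the maximum modulus principle on $|s|\leq r$ gives
$$
|p(z)|=|p(z_1,z')|\leq\max_{|s|=r}|p(z_1+s,z')|\leq\|p\|_S.
$$
Since $p$ was arbitrary, $z\in\hat S$, proving $K_\epsilon\subset\hat S$.

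The only delicate point is the uniformity of the choice of $\eta$ in $z\in K_\epsilon$; this is exactly where regularity of $K$ enters, via the continuity (and hence uniform continuity on compacts) of $\bar\rho_K$. Without regularity only pointwise upper semicontinuity of $\bar\rho_K$ would be available, and a uniform $\eta$ could fail to exist.
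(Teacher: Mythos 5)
Your proof is correct, and it streamlines the paper's argument. The paper also starts from uniform continuity of $\bar\rho_K$ (valid since $K$ is regular, so $\bar\rho_K$ is continuous) to get a $\delta$-thickening of $K_\epsilon$ inside $K$, but it then reduces to points $z\in\partial K_\epsilon$ with $|z_1|$ small and runs a two-stage disk construction: first, for each $\psi$, a circle in the $z_2$-direction at first coordinate $\frac{\delta}{4}e^{i\psi}$ (which lies in $S$) shows the center $(\frac{\delta}{4}e^{i\psi},z')$ is in $\widehat S$; then the resulting circle $\{(\frac{\delta}{4}e^{i\psi},z'):\psi\in[0,2\pi]\}\subset\widehat S$ bounds the $z_1$-disk $\{(w_1,z'):|w_1|\le\delta/4\}$, which is therefore in $\widehat S$ and contains $z$. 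You instead handle every $z\in K_\epsilon$ with $|z_1|<\eta$ in one stroke, using the single disk $\{(z_1+s,z'):|s|\le \eta+|z_1|\}$ whose boundary circle already lies in $S$ itself (the radius $\eta+|z_1|$ forces $|z_1+s|\ge\eta$ there, and uniform continuity keeps it in $K$), so one application of the maximum modulus principle gives $|p(z)|\le\|p\|_S$ directly. This buys a shorter argument that avoids both the reduction to $\partial K_\epsilon$ and the intermediate step of producing circles in $\widehat S$ and then filling them in; the paper's version is essentially the same idea executed with an extra shift in the $z_2$-direction. Both proofs use regularity of $K$ in exactly the same place, namely to get a uniform $\eta$ (resp.\ $\delta$) from continuity of $\bar\rho_K$, as you note at the end.
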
 

\begin{proof} Since $\bar \rho_K$ is continuous and $K$ is the closure of 
the balanced domain of holomorphy $\{z\in \C^d: \bar \rho_K < 0\}$, it follows (cf., pp. 187-188 of \cite{K}) that for all $\epsilon >0$, $K_{\epsilon}$ equals its (homogeneous) polynomial 
hull. To prove the result, it suffices to show that there exists $\eta = \eta (\epsilon)>0$ such that for all $z=(z_1,z')\in \partial K_{\epsilon}$, we have $z\in \widehat {S}$.

 By uniform continuity of $\bar \rho_K$ on compact sets, given $\epsilon >0$, there exists $\delta >0$ such that for all $z\in  \partial K_{\epsilon}$, if $\tilde z \in\C^d$ satisfies $|z-\tilde z| < \delta$, then $z\in K$ (i.e., if $\bar \rho_K(z) = -\epsilon$, then $\bar \rho_K(\tilde z) <0$).  
 Fix $\eta < \delta/4$ and define $S:=S(\epsilon,\eta)$ as above. Take $z=(z_1,z')=(z_1,z_2, z_3,...,z_d) \in  \partial K_{\epsilon}$. If $|z_1| \geq \delta/4$ then $z\in S$. Thus suppose $|z_1|< \delta/4$. For each $\tilde z =(\tilde z_1,\tilde z')$ with $|z-\tilde z| < \delta$ we have $\tilde z \in K$. In particular, for $\psi\in [0,2\pi]$, consider all points of the form 
 $$\{z_{\psi}(t):= (\frac{\delta}{4} e^{i\psi}, z_2 + t \delta/4, z_3,...,z_d): t\in \C, \ |t|=1\}.$$
 This is a circle of radius $\delta/4$ centered at the point $(\frac{\delta}{4}e^{i\psi}, z_2, z_3,...,z_d)=(\frac{\delta}{4} e^{i\psi}, z')$ lying in the complex plane $\{(\xi_1,...,\xi_d):\xi_1=\frac{\delta}{4}e^{i\psi}, \ \xi_3=z_3, \ ..., \ \xi_d=z_d\}$. Note that $|z_{\psi}(t) -z| < \delta$ and hence $z_{\psi}(t) \in K$ for all $t\in \C$ with  $|t|=1$. Moreover, since $\delta/4 > \eta$, we have $z_{\psi}(t) \in S$. This shows that the center $(\frac{\delta}{4}e^{i\psi}, z_2, z_3,...,z_d)=(\frac{\delta}{4} e^{i\psi}, z')$ belongs to the polynomial hull $ \widehat {S}$ (more generally, for a compact set $K$, if there is an analytic disk $f: \Delta=\{t\in \C: |t|<1\} \to\C^d$ with $f(0)=z_0$ and $f(\partial \Delta)\subset K$, then $z_0\in \hat K$). But this holds for each $ \psi\in [0,2\pi]$. Thus the whole circle 
 $$\{z_{\psi}:= (\frac{\delta}{4} e^{i\psi}, z'): \psi\in [0,2\pi]\}$$
 belongs to $ \widehat {S}$. But this circle bounds the disk
 $$\{(w_1, z'):|w_1|\leq \delta/4\}$$
 which thus is also contained in $ \widehat {S}$. In particular, $z=(z_1,z')\in \widehat {S}$.

 \end{proof}
 
 The second result is due to Rafal Pierzchala. Theorem 5.3.9 of \cite{K} shows that the notion of regularity for a compact set is preserved under nondegenerate holomorphic mappings. Here, $f$ is nondegenerate on an open set $U$ if the complex Jacobian of $f$ has maximal rank on a dense set in $U$. Pierzchala's result shows that the same is true for local regularity. Recall that local regularity of a compact set at a point implies regularity but the converse is not necessarily true.
 
 \begin{lemma} \label{rafal} Let $f:U\to \C^m$ be a holomorphic mapping on an open set $U\subset \C^d$ and let $K\subset \C^d$ be compact with $\hat K \subset U$. Let $a\in K$ and let $U(a)$ be the connected component of $U$ containing $a$. If $K$ is locally regular at $a$ and $f$ is nondegenerate in a neighborhood of $a$, then $f(K)$ is locally regular at $f(a)$.
 
 \end{lemma}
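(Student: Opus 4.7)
My plan is to reduce the statement to a pointwise Bernstein--Walsh-type estimate following (and localizing) the approach in Klimek's proof of Theorem 5.3.9 of \cite{K}. First I localize: given $r>0$, set $K':=K\cap B(a,r)$. For $r$ small, $K'$ is compact with $\hat{K'}\subset U(a)$, and by continuity of $f$ at $a$ we may arrange $f(K')$ to lie in any prescribed neighborhood of $b:=f(a)$. Local regularity of $K$ at $a$ gives $V^*_{K'}(a)=0$; since $V^*_{K'}$ is upper semicontinuous and nonnegative, this forces $\lim_{z\to a}V^*_{K'}(z)=0$. By monotonicity of the extremal function in its compact argument (smaller compact set gives larger extremal function), it suffices to show $V^*_{f(K')}(b)=0$.

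The core step is to establish: there is a constant $C=C(f,\hat{K'})>0$ such that, for every polynomial $p$ on $\C^m$ of degree $n$ with $\|p\|_{f(K')}\leq 1$ and every $z$ in a neighborhood of $a$,
\begin{equation*}
\frac{1}{n}\log|p(f(z))|\;\leq\;C\,V^*_{K'}(z)+o(1)\qquad(n\to\infty).
\end{equation*}
The idea is to approximate the holomorphic function $p\circ f$ on $\hat{K'}$ by a polynomial $P$ on $\C^d$ of degree $\leq Cn$ with $\|p\circ f-P\|_{\hat{K'}}$ exponentially small in $n$ (Oka--Weil together with a quantitative degree bound), then apply the standard polynomial Bernstein--Walsh inequality $|P(z)|\leq\|P\|_{K'}\exp(\deg(P)\,V^*_{K'}(z))$ to $P$. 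Since $\|P\|_{K'}\leq\|p\circ f\|_{K'}+o(1)\leq 1+o(1)$, taking $n$-th roots and using that the error term is exponentially small in $n$ gives the displayed estimate.

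Taking the supremum of this estimate over such $p$ yields $V_{f(K')}(f(z))\leq CV^*_{K'}(z)$ for $z$ near $a$; hence $V_{f(K')}(f(z))\to 0$ as $z\to a$. To pass from values $w=f(z)$ to arbitrary $w\to b$, I invoke nondegeneracy once more: in the relevant setting ($d\geq m$, as in the application to Proposition \ref{step3} where $m=d-1$), Remmert's open mapping theorem applied to the nondegenerate holomorphic map $f:U(a)\to\C^m$ shows that $f$ is open near $a$, so every $w$ close to $b$ has the form $f(z)$ for some $z$ close to $a$. This gives $\limsup_{w\to b}V_{f(K')}(w)\leq 0$, which combined with the trivial $V^*_{f(K')}(b)\geq 0$ yields $V^*_{f(K')}(b)=0$. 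The main obstacle is the polynomial approximation step: producing an approximant of $p\circ f$ on $\hat{K'}$ with linearly bounded degree and exponentially small error is the technical heart of Klimek's argument and is precisely where nondegeneracy of $f$ enters in an essential way; once this is in hand, the localization and upper-semicontinuity considerations above transfer the conclusion from the globally regular setting of \cite{K} to the pointwise (local-regularity) setting at hand.
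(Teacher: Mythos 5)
Your localization step is exactly the paper's: a Lipschitz/continuity bound so that $f(K\cap B(a,r'))\subset f(K)\cap B(f(a),r)$, plus monotonicity of extremal functions, reduces the problem to showing that the image of the localized set is regular at $f(a)$. But at that point the paper simply applies Klimek's Theorem 5.3.9 \emph{at the point} to $E:=K\cap B(a,r/M)$ (which is regular at $a$ by local regularity of $K$) and is done, whereas you attempt to reprove that theorem, and your reproof has a genuine gap at the final step. The claim that nondegeneracy plus ``Remmert's open mapping theorem'' makes $f$ open near $a$, so that every $w$ near $b=f(a)$ is of the form $f(z)$ with $z$ near $a$, is false for general nondegenerate maps: take $f(z_1,z_2)=(z_1,z_1z_2)$, whose Jacobian determinant is $z_1$ (so the rank is maximal on a dense set), yet the image of every small polydisc about the origin misses all points $(0,w_2)$ with $w_2\neq 0$, so $f$ is not open at $0$; Remmert's theorem requires fibers of the expected dimension, which fails where the rank drops. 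The lemma is stated for arbitrary nondegenerate $f$, and your argument only controls $V_{f(K')}$ along the image of $f$, so it does not deliver $\limsup_{w\to b}V_{f(K')}(w)\leq 0$. The standard way to bridge this (and what is in effect inside Klimek's proof) uses no openness at all: for nondegenerate $f$ the preimage of the pluripolar set $\{V_{f(K')}<V^*_{f(K')}\}$ is pluripolar, so the plurisubharmonic function $V^*_{f(K')}\circ f$ at $a$ is controlled by the limsup of $V_{f(K')}\circ f$ off a pluripolar set, which is what your Bernstein--Walsh estimate bounds.

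Two further remarks on the core estimate you sketch. First, you defer precisely the technical heart (an approximant of $p\circ f$ on $\widehat{K'}$ of degree $\leq Cn$ with exponentially small error), so even granting the last step the proof would not be self-contained; if one is willing to take Klimek 5.3.9 as a black box, the paper's route shows that nothing beyond the localization you already set up is needed. Second, the role you assign to nondegeneracy there is misplaced: approximating the holomorphic function $p\circ f$ on $\widehat{K'}$ requires nothing about the rank of $f$; nondegeneracy is what guarantees $f(K')$ is nonpluripolar, hence $V^*_{f(K')}$ is locally bounded, which is what you need to bound $\sup|p\circ f|$ on a fixed neighborhood of $\widehat{K'}$ by $e^{An}$ uniformly in $p$ with $\|p\|_{f(K')}\leq 1$ before choosing the degree $Cn$ to beat it --- and it is also what rescues the final step, in the pluripolar-preimage form described above rather than via openness.
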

 
 \begin{proof} We want to show that $\lim_{w\to f(a)}V_{f(K)\cap B(f(a),r)}(w)=0$. Fix $r_0$ with $B(a,r_0)\subset U$ and take $M>0$ with 
$$ |f(\zeta)-f(a)|\leq M|\zeta -a| \ \hbox{for all} \ \zeta \in B(a,r_0). 
$$
 First suppose $r<Mr_0$ and let $E:=K\cap B(a,r/M)$. Note that $\hat E \subset \hat K\cap B(a,r/M)$. From Theorem 5.3.9 of \cite{K} we have 
  \begin{equation} \label{reginv} \lim_{w\to f(a)}V_{f(E)}(w)=0. 
   \end{equation}
 Since $f(E)\subset f(K\cap B(a,r/M))$ we have
   \begin{equation} \label{easy} V_{f(K)\cap B(f(a),r)}\leq V_{f(E)}. 
   \end{equation}
   Thus, from (\ref{reginv}) and (\ref{easy}),
   $$0 \leq \lim_{w\to f(a)}V_{f(K)\cap B(f(a),r)}(w)\leq \lim_{w\to f(a)} V_{f(E)}(w)=0,$$
   yielding the result when $r<Mr_0$.
   
   If $r\geq Mr_0$, we observe that 
   $$f(K)\cap B(f(a),r_0M/2) \subset f(K)\cap B(f(a),r);$$
   hence applying the previous case with $r_0M/2$ in place of $r$ we obtain 
   $$0 \leq \lim_{w\to f(a)}V_{f(K)\cap B(f(a),r)}(w)\leq \lim_{w\to f(a)} V_{f(K)\cap B(f(a),r_0M/2)}(w)=0.$$
   This shows that $\lim_{w\to f(a)}V_{f(K)\cap B(f(a),r)}(w)=0$.

 \end{proof}

{\bf Authors:}\\[\baselineskip]
T. Bloom, bloom@math.toronto.edu\\
University of Toronto, Toronto, CA\\
\\[\baselineskip]
N. Levenberg, nlevenbe@iu.edu\\
Indiana University, Bloomington, IN, USA\\

\end{document}